%

\documentclass[sts]{imsart}

\RequirePackage{amsthm,amsmath,amsfonts,amssymb,booktabs}
\RequirePackage[authoryear]{natbib}
\RequirePackage[colorlinks,citecolor=blue,urlcolor=blue]{hyperref}

\startlocaldefs
\theoremstyle{plain}
\newtheorem{theorem}{Theorem}

\newtheorem{corollary}{Corollary}

\theoremstyle{definition}
\newtheorem{remark}{Remark}
\newtheorem{example}{Example}
\newtheorem{definition}{Definition}

\DeclareMathOperator{\var}{\text{var}}

\newcommand{\df}{df}
\DeclareMathOperator{\se}{\text{se}}

\DeclareMathOperator{\SEF}{\text{SEF}}
\DeclareMathOperator{\BF}{\text{BF}}
\newcommand{\tmax}{t_{\boldsymbol{R}^2}^{\max}}
\newcommand{\R}{\boldsymbol{R}}
\newcommand{\Ry}{R^2_{Y\sim Z | D\mathbf{X}}}
\newcommand{\Rd}{R^2_{D\sim Z | \mathbf{X}}}

\newcommand{\Rdlz}{R^2_{D\sim Z_L | \mathbf{X}}}
\newcommand{\Rymax}{R^{\max }_{Y}}
\newcommand{\Rdmax}{R^{\max }_{D}}

\newcommand{\Rdmv}{R^2_{D\sim \mathbf{Z} | \mathbf{X}}}

\newcommand{\fstar}{f^*_{\alpha, \df-1}}
\newcommand{\fstarsq}{f^{* 2}_{\alpha, \df-1}}

\newcommand{\tstar}{t^*_{\alpha, \df-1}}
\newcommand{\tstarsq}{t^{*2}_{\alpha, \df-1}}
\newcommand{\tobs}{t_{r}}

\newcommand{\RV}{\text{RV}}
\newcommand{\XRV}{\text{XRV}}
\newcommand{\XRVI}{\text{XRVI}}
\newcommand{\RVI}{\text{RVI}}

\newcommand{\Rystar}{R^{* 2}_{Y\sim Z | D\mathbf{X}}}
\newcommand{\Rdstar}{R^{* 2}_{D\sim Z | \mathbf{X}}}
\newcommand{\Ryx}{R^2_{Y \sim \mathbf{X}|D}}
\newcommand{\Rdx}{R^2_{D \sim \mathbf{X}}}

\endlocaldefs

\begin{document}

\begin{frontmatter}
\title{On the minimum strength of (unobserved) covariates to overturn an insignificant result}
\runtitle{On the minimum strength of association to overturn an insignificant result}

\begin{aug}
\author[A]{\fnms{Danielle}~\snm{Tsao}\ead[label=e1]{dltsao@uw.edu}},
\author[B]{\fnms{Ronan}~\snm{Perry}\ead[label=e2]{rflperry@uw.edu}}
\and
\author[C]{\fnms{Carlos}~\snm{Cinelli}\ead[label=e3]{cinelli@uw.edu}}
\address[A]{University of Washington, Seattle, USA\printead[presep={\ }]{e1}.}
\address[B]{University of Washington, Seattle, USA\printead[presep={\ }]{e2}.}
\address[C]{University of Washington, Seattle, USA\printead[presep={\ }]{e3}.}
\end{aug}

\begin{abstract}
We study conditions under which the addition of variables to a regression equation can turn a previously statistically insignificant result into a significant one. Specifically, we characterize the minimum strength of association required for these variables---both with the dependent and independent variables, or with the dependent variable alone---to elevate the observed \mbox{t-statistic} above a specified significance threshold. 
Interestingly, we show that it is considerably difficult to overturn a statistically insignificant result solely by reducing the standard error.
Instead, included variables  must also alter the point estimate to achieve such reversals in practice. 
Our results can be used to conduct sensitivity analyses against unobserved variables and to bound the maximum t-value one can obtain given different subsets of observed covariates, and may also offer algebraic explanations for  patterns of  reversals seen in empirical research, such as those documented by \cite{lenz2021achieving}.
\end{abstract}

\begin{keyword}
\kwd{ordinary least squares}
\kwd{linear regression}
\kwd{omitted variable bias}
\kwd{robustness values}
\kwd{p-hacking}
\end{keyword}

\end{frontmatter}

\section{Introduction}

There are many legitimate reasons for including covariates in a regression equation. These reasons arise both in experimental and observational settings. Consider, for example, a randomized controlled trial (RCT) in which a researcher uses ordinary least squares (OLS) to estimate the average effect of a treatment on an outcome. Although randomization eliminates confounding by design, including pre-treatment covariates can still improve the precision of estimated effects. In observational studies, beyond precision gains, accounting for covariates is often necessary to block spurious pathways, or mitigate other sources of bias \citep{cinelli2022crash}. Such concerns not only motivate the inclusion of observed covariates in a regression equation, but also raise the question of how results might have differed had certain unmeasured variables been included.

Despite these valid motivations, applied researchers often fail to adequately justify their choice of control variables. And it is not uncommon for the statistical significance of main findings to depend on such choices. For example, in the American Journal of Political Science, \cite{lenz2021achieving} found that over 30\% of articles relied on the inclusion of covariates to turn previously statistically insignificant findings into significant ones.  According to \cite{lenz2021achieving}, none of the articles justified this choice, nor did they disclose these reversals. More generally, the practice of testing various model specifications until one obtains a statistically significant result---whether intentionally or not---appears to be prevalent across disciplines \citep{brodeur2016star,vivalt2019specification}.  These significance reversals raise several questions. Under what conditions can they occur? Are there natural limits to the t-statistic one can obtain via different choices of covariates? What observable patterns in the data should emerge when these reversals take place?

In this paper, we provide simple algebraic answers to these questions in the context of OLS. Building on recent results from \cite{cinelli_making_2020, cinelli2025omitted}, we first characterize the maximum change in the t-statistic that covariates with bounded strength can produce. We then derive the minimum strength of association that such covariates must have---whether with both the dependent and independent variables or with the dependent variable alone---to elevate the observed t-statistic above a given threshold. Lastly, we provide an empirical example. Our results can be used to conduct sensitivity analyses regarding unobserved ``suppressors'' and to bound the maximum attainable t-value resulting from the choice of observed control variables. They also offer algebraic explanations for patterns of significance reversals observed in empirical research.\footnote{Note that post-hoc selection of variables based on the data usually invalidates standard distributional results of test statistics---see, e.g., \cite{berk2013valid} for a proposal of valid post-selection inference in the context of linear regression. We clarify that such results are orthogonal to the main results of our paper. Here we answer the following question: \emph{when can the addition of variables to a regression equation elevate the observed t-statistic above a pre-specified threshold?} These results pertain to the algebra of least squares alone, and do not depend on assumptions on the underlying sampling distribution.}

\section{Preliminaries} \label{background}

\subsection{Problem setup}
Let $Y$ be an $(n \times 1)$ vector containing the dependent variable  for $n$ observations; $D$ be an $(n \times 1)$ vector of the independent variable of interest and $\mathbf{X}$ be an $(n \times p)$ matrix of observed covariates including a constant. Consider the regression equation
\begin{align}
    Y = \hat{\lambda}_{r }D + \mathbf{X}\hat{\beta}_r + \hat{\epsilon}_{r}, \label{resmodel}
\end{align}
where $\hat{\lambda}_{r}$, $\hat{\beta}_r$ are the OLS estimates of the regression coefficients of $Y$  on $D$ and $\mathbf{X}$, and $\hat{\epsilon}_{r}$ is the corresponding $(n \times 1)$ vector of residuals. 

Let $\widehat{\se}(\hat{\lambda}_r)$ be the estimated classical (homoskedastic) standard error of $\hat{\lambda}_r$. Under the classical linear regression model, the t-statistic for testing the null hypothesis \mbox{$H_0:\lambda_r = \lambda_0$}, i.e.,
\begin{align}
t_{r}:= \frac{\hat{\lambda}_r - \lambda_0}{\widehat{\se}(\hat{\lambda}_r)}, \label{t-res}
\end{align}
 follows a t-distribution with $\df:= n-p-1$ degrees of freedom. Denoting by $t^*_{\alpha, \df}$ the \mbox{$(1-\alpha/2)$} quantile of  this distribution, the t-statistic (\ref{t-res}) is considered ``statistically significant with significance level $\alpha$'' if the absolute value of $t_{r}$ exceeds that of $t^*_{\alpha, \df}$. 
 Note that the t-statistic depends on the choice of $\lambda_0$. For simplicity, we use the notation $t_r$ with the understanding that a particular $\lambda_0$ has been chosen.

Now suppose the t-statistic (\ref{t-res}) is insignificant. Let $Z$ be an $(n\times1)$ vector of a (potentially unobserved) covariate whose inclusion in the regression equation we wish to assess. In contrast to the ``restricted" regression in \eqref{resmodel}, we now consider the long regression equation of $Y$ on $D$ after adjusting for both $\mathbf{X}$ and $Z$,
\begin{align} 
    Y = \hat{\lambda}D + \mathbf{X}\hat{\beta} + \hat{\gamma}Z + \hat{\epsilon}.\label{regmodel}
\end{align}
Here, the t-statistic for testing \mbox{$H_0:\lambda = \lambda_0$} is
\begin{align}
t:= \frac{\hat{\lambda} - \lambda_0}{\widehat{\se}(\hat{\lambda})} \label{t-long}
\end{align}
where $\hat{\lambda}$ and $\widehat{\se}(\hat{\lambda})$ have the same interpretation as before, just now with an additional adjustment for $Z$. 
We wish to quantify the properties that $Z$ needs to have such that the t-statistic in (\ref{t-long}) will exceed the critical threshold $t^*_{\alpha, \df-1}$.

\begin{remark}
\label{rmk:multiple}
Throughout the paper we illustrate results using the critical threshold $t^*_{\alpha, \df-1}$, which is usually justified by classical linear model theory. However, we remind the reader that researchers may choose the critical threshold as they see fit, possibly justified by different distributional assumptions. For example, if a researcher is testing $k$ individual hypotheses, they may consider the Bonferroni-corrected threshold $t^*_{\alpha/k, \df-1}$.
\end{remark}

\subsection{Omitted variable bias formulas}

Comparing (\ref{t-res}) with (\ref{t-long}),
observe that the relative change in the absolute value of the
t-statistic can be decomposed as the product of the relative change 
in the bias
and the relative change in the standard error:
\begin{align}
\left|\frac{t}{t_r}\right|= \left|\frac{\hat{\lambda} - \lambda_0}{\hat{\lambda}_r-\lambda_0} \right| \times \left(\frac{{\widehat{\se}(\hat{\lambda}_r)}}{\widehat{\se}(\hat{\lambda})}\right). \label{eq:t.tr}
\end{align}
Concretely, for $Z$ to double the t-statistic, it must either double the absolute difference between the point estimate and $\lambda_0$, halve the standard errors, or achieve some combination of both.

To characterize these changes in terms of how much residual variation $Z$ explains of $D$ and $Y$, we refer to the following result from \cite{cinelli_making_2020}. In what follows, we define the sample partial $R^2$ of $A$ with $B$ given $C$ as  $R^2_{A\sim B|C} := 1 - \frac{\widehat{\var}(A^{\perp B,C})}{\widehat{\var}(A^{\perp C})}$, where $\widehat{\var}(\cdot)$ denotes the sample variance, and $A^{\perp B,C}$ denotes the OLS residual from regressing $A$ on $B$ and $C$.
\begin{theorem}[\citealp{cinelli_making_2020}]
\label{thm:ovb}
    Let $\Ry$ denote the sample partial $R^2$ of $Y$ with $Z$ after adjusting for $D$ and $\mathbf{X}$, and let $\Rd < 1$ denote the sample partial $R^2$ of $D$ with $Z$ after adjusting for $\mathbf{X}$. Then,
\begin{align}
    |\hat{\lambda}_{r } - \hat{\lambda}| = \underbrace{\sqrt{ \frac{\Ry\Rd}{1-\Rd}}}_{\textrm{BF}} \times \widehat{\se}(\hat{\lambda}_{r }) \times \sqrt{\df} \label{bf}
\end{align}
and 
\begin{align}
    \widehat{\se}(\hat{\lambda}) 
    = \underbrace{\sqrt{\frac{1-\Ry}{1-\Rd}}}_{\textrm{SEF}}  \times \widehat{\se}(\hat{\lambda}_{r }) \times\sqrt{\frac{\df}{\df-1}}. \label{selambda}
\end{align}
To aid interpretation, we call the terms BF in (\ref{bf}) and SEF in (\ref{selambda}) the ``bias factor'' and the ``standard error factor'' of $Z$, respectively. 
\label{ovb}
\end{theorem}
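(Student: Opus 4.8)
The plan is to reduce both regressions to equivalent regressions on the $\bX$-residualized variables via the Frisch--Waugh--Lovell (FWL) theorem, and then to express each ingredient---the two slopes and the two standard errors---as ratios of sums of squares that collapse onto the partial $R^2$ quantities $\Ry$ and $\Rd$. Let $M_{\bX} = I - \bX(\bX'\bX)^{-1}\bX'$ be the annihilator of $\bX$, and write $\tilde{Y} = M_{\bX}Y$, $\tilde{D} = M_{\bX}D$, $\tilde{Z} = M_{\bX}Z$. By FWL, $\hat{\lambda}_r = (\tilde{D}'\tilde{Y})/(\tilde{D}'\tilde{D})$ is the simple slope of $\tilde{Y}$ on $\tilde{D}$, while $\hat{\lambda}$ and $\hat{\gamma}$ are the coefficients of the two-regressor least-squares fit of $\tilde{Y}$ on $(\tilde{D},\tilde{Z})$; moreover the residual vectors $\hat{\epsilon}_r,\hat{\epsilon}$ coincide with those of these reduced fits. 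This isolates the whole problem inside the three vectors $\tilde{Y},\tilde{D},\tilde{Z}$, with the two residual degrees of freedom $\df = n-p-1$ (restricted) and $\df-1$ (long) carried separately.

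First I would establish the standard-error factor \eqref{selambda}. Writing $\widehat{\se}(\hat{\lambda}_r)^2 = \hat{\sigma}_r^2/(\tilde{D}'\tilde{D})$ and $\widehat{\se}(\hat{\lambda})^2 = \hat{\sigma}^2/\big((\tilde{D}^{\perp\tilde{Z}})'\tilde{D}^{\perp\tilde{Z}}\big)$, where $\tilde{D}^{\perp\tilde{Z}}$ is $\tilde{D}$ further residualized on $\tilde{Z}$ and $\hat{\sigma}_r^2 = \hat{\epsilon}_r'\hat{\epsilon}_r/\df$, $\hat{\sigma}^2 = \hat{\epsilon}'\hat{\epsilon}/(\df-1)$, two identities do all the work: (i) $(\tilde{D}^{\perp\tilde{Z}})'\tilde{D}^{\perp\tilde{Z}} = (1-\Rd)\,\tilde{D}'\tilde{D}$, which is the definition of $\Rd$; and (ii) $\hat{\epsilon}'\hat{\epsilon} = (1-\Ry)\,\hat{\epsilon}_r'\hat{\epsilon}_r$, which is the definition of $\Ry$ as the fraction of the restricted residual variation explained by adding $Z$. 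Taking the ratio of the two squared standard errors and restoring the degrees-of-freedom divisors yields $\big((1-\Ry)/(1-\Rd)\big)\cdot\big(\df/(\df-1)\big)$, which is \eqref{selambda} after a square root.

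For the bias factor \eqref{bf} I would use the explicit two-regressor solution: inverting the $2\times 2$ Gram matrix of $(\tilde{D},\tilde{Z})$ gives a closed form for $\hat{\lambda}$, and subtracting from $\hat{\lambda}_r$ produces, after cancellation,
\[
\hat{\lambda}_r - \hat{\lambda} = \frac{(\tilde{D}'\tilde{Z})\big[(\tilde{D}'\tilde{D})(\tilde{Z}'\tilde{Y}) - (\tilde{D}'\tilde{Y})(\tilde{D}'\tilde{Z})\big]}{(\tilde{D}'\tilde{D})\big[(\tilde{D}'\tilde{D})(\tilde{Z}'\tilde{Z}) - (\tilde{D}'\tilde{Z})^2\big]},
\]
whose numerator is the familiar omitted-variable-bias product $\hat{\gamma}\,\hat{\delta}$ in disguise. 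The remaining task is to read off the factors: $(\tilde{D}'\tilde{Z})^2/\big[(\tilde{D}'\tilde{D})(\tilde{Z}'\tilde{Z})\big] = \Rd$; the squared bracketed numerator term, divided by $(\tilde{D}'\tilde{D})(\tilde{Z}'\tilde{Z}) - (\tilde{D}'\tilde{Z})^2$ and by the restricted residual sum of squares $\hat{\epsilon}_r'\hat{\epsilon}_r = \tilde{Y}'\tilde{Y} - (\tilde{D}'\tilde{Y})^2/(\tilde{D}'\tilde{D})$, is exactly the squared partial correlation $\Ry$; and $\widehat{\se}(\hat{\lambda}_r)\sqrt{\df} = \sqrt{\hat{\epsilon}_r'\hat{\epsilon}_r/(\tilde{D}'\tilde{D})}$. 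Substituting these and simplifying collapses the display to $\sqrt{\Ry\Rd/(1-\Rd)}\cdot\widehat{\se}(\hat{\lambda}_r)\cdot\sqrt{\df}$, as claimed.

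The main obstacle is bookkeeping rather than any single conceptual step, and it concentrates in two places: correctly matching the raw sum-of-squares expressions to the partial-$R^2$ definitions---especially recognizing $\tilde{Y}'\tilde{Y} - (\tilde{D}'\tilde{Y})^2/(\tilde{D}'\tilde{D})$ as $\hat{\epsilon}_r'\hat{\epsilon}_r$ and the full quotient as the squared partial correlation underlying $\Ry$---and keeping the two degrees-of-freedom conventions straight so that the $\sqrt{\df}$ in \eqref{bf} and the $\sqrt{\df/(\df-1)}$ in \eqref{selambda} emerge with the correct exponents. I would guard against both by carrying every quantity as a raw sum of squares (dividing by the appropriate $\df$ only symbolically) for as long as possible, and converting back to standard errors only at the very last step.
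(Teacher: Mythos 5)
The paper does not actually prove this theorem---it is imported from \citet{cinelli_making_2020}---so there is no in-paper proof to compare against; your FWL-plus-partial-$R^2$ derivation is the standard route and is essentially the argument given in that source. Your proof is correct: both identities behind the standard-error factor check out, and the explicit $2\times 2$ inversion does collapse to the bias-factor formula. One bookkeeping slip to fix when writing it out: the quotient you identify as $\Ry$, namely $\bigl[(\tilde{D}'\tilde{D})(\tilde{Z}'\tilde{Y}) - (\tilde{D}'\tilde{Y})(\tilde{D}'\tilde{Z})\bigr]^2$ divided by $(\tilde{D}'\tilde{D})(\tilde{Z}'\tilde{Z}) - (\tilde{D}'\tilde{Z})^2$ and by $\hat{\epsilon}_r'\hat{\epsilon}_r$, equals $\tilde{D}'\tilde{D}\cdot\Ry$ rather than $\Ry$, since the squared partial correlation's denominator carries $\tilde{Y}'\tilde{Y}\,\tilde{D}'\tilde{D} - (\tilde{D}'\tilde{Y})^2 = \tilde{D}'\tilde{D}\,\hat{\epsilon}_r'\hat{\epsilon}_r$; the stray factor of $\tilde{D}'\tilde{D}$ is absorbed by the remaining terms and the final display is still exactly \eqref{bf}.
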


We can use Theorem~\ref{thm:ovb}  to write the absolute value of the t-statistic~(\ref{t-long}) as a function of $\Ry$ and $\Rd$, i.e.,
\begin{align}\label{eq:abs_tstat}
        {\textstyle t(\Ry, \Rd)= 
        \frac{|(\hat{\lambda}_{r } - \lambda_0) \pm \BF \times \widehat{\se}(\hat{\lambda}_{r }) \times \sqrt{\df}|}{\widehat{\se}(\hat{\lambda}_{r })\times\SEF\times\sqrt{\frac{\df}{\df-1}}},} 
\end{align}
where the sign of the bias term, denoted by $\pm$, depends on whether $\hat{\lambda}_r > \hat{\lambda}$ or vice-versa.
This re-formulation allows us to assess how $Z$ affects inferences for any postulated pair of partial $R^2$ values $\{\Ry, \Rd\}$, and it will help us determine the conditions under which the addition of $Z$ turns a previously statistically insignificant result into a significant one.

To set the stage for upcoming results, we note an immediate but important corollary of Theorem~\ref{thm:ovb}:
for a fixed observed t-statistic and fixed partial $R^2$ values of $Z$, the impact that $Z$ has on the relative bias depends on the sample size, whereas the impact it has on the relative change in standard errors does not. 
That is, the relative change in the bias is given by
$$
\frac{\hat{\lambda} - \lambda_0}{\hat{\lambda}_r-\lambda_0} = 1 \pm \frac{\BF}{t_r} \times \sqrt{\df}.
$$
For \emph{fixed $t_r$} and fixed $\{\Ry, \Rd\}$ (which sets $\BF$), 
larger sample sizes yield larger relative changes in the distance of the estimate from the null hypothesis.  Whereas, the relative change in the standard error is given by
\begin{align}
\frac{\widehat{\se}(\hat{\lambda}_r)}{\widehat{\se}(\hat{\lambda})}= \frac{1}{\SEF}\times \sqrt{\frac{\df-1}{\df}} \approx \frac{1}{\SEF}
\end{align}
and is thus unaffected by the sample size. As an example, 
halving standard errors is equally challenging in a sample of 100 as in a sample of 1,000,000; in contrast, \emph{given a fixed $t_r$}, doubling the point estimate becomes much easier as the sample size grows. This distinction will become clearer as we continue with our analysis.

\section{Results}

In this section we present two main results.
First, given upper-bounds on $\Ry$ and $\Rd$, we derive the ``maximum adjusted t-statistic'' which quantifies the maximum possible value that $t(\Ry, \Rd)$ can attain after including $Z$ in the regression equation. Then we solve for the minimum upper bound on $\{\Ry, \Rd\}$, hereby referred to as the ``strength of $Z$,''  such that it guarantees that the maximum adjusted t-statistic exceeds the desired threshold. 

In what follows, it is useful to define the quantities
\begin{align*}
        f_{r} := |t_{r}| / \sqrt{\df} \quad \text{and} \quad f^*_{\alpha, \df} := t^*_{\alpha, \df} / \sqrt{\df},
\end{align*}
which normalize the observed t-statistic and the critical threshold by the degrees of freedom. These definitions greatly simplify formulas and derivations. 

\subsection{On the maximum adjusted {t}-statistic}

We start by defining the maximum value that the t-statistic~(\ref{t-long}) could attain given $Z$ with  bounded strength.
    
\begin{definition}[Maximum adjusted t-statistic] For a fixed null hypothesis $H_0:\lambda = \lambda_0$, and  upper bounds on $\Ry$ and $\Rd$, denoted by $\R^2 = \{\Rymax,\Rdmax\}$, we define the maximum adjusted t-statistic as
\begin{align*}
    \tmax &:= \max_{\substack{\Ry, \Rd}} t(\Ry, \Rd)\\
    &\text{s.t.} \enspace \Ry \leq \Rymax, \Rd \leq \Rdmax.
\end{align*}
\end{definition}

 The solution to the above problem has a simple closed-form characterization. 

\begin{theorem}[Closed-form solution to $t_{\boldsymbol{R^2}}^{\max}$]
\label{thm:t-max}
Let $\Rymax < 1$. Then,
$$
{ \textstyle t_{\boldsymbol{R^2}}^{\max} =  \frac{f_{r} \sqrt{1-\Rdstar} + \sqrt{\Rystar\Rdstar}}{\sqrt{(1-\Rystar) / (\df-1)}} }
$$
where 
$$
{ \textstyle  \{\Rystar, \Rdstar\} =  \{\Rymax, \Rdmax \} }
$$
if $f_{r} ^2 < \Rymax(1-\Rdmax)/\Rdmax$ and 
$$
{ \textstyle  \{\Rystar, \Rdstar\} =  \left\{\Rymax, \frac{\Rymax}{f_{r} ^2 + \Rymax}\right\} }
$$
otherwise.
\end{theorem}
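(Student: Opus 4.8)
The plan is to reduce the two-variable constrained optimization to a clean univariate problem and then solve it by first-order conditions, tracking when the box constraint on $\Rd$ becomes active.

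First I would simplify $t(\Ry, \Rd)$ into the target form. Recalling that $\hat{\lambda}_r - \lambda_0 = t_r\,\widehat{\se}(\hat{\lambda}_r)$, and that the pair $\{\Ry, \Rd\}$ constrains only the \emph{magnitudes} of $Z$'s associations (so the sign of the bias term is unpinned and, in the worst case, adds to $|\hat{\lambda}_r - \lambda_0|$), the maximizing configuration takes the $+$ branch. Dividing numerator and denominator of $t$ by $\widehat{\se}(\hat{\lambda}_r)\sqrt{\df}$ and substituting $\BF = \sqrt{\Ry\Rd/(1-\Rd)}$, $\SEF = \sqrt{(1-\Ry)/(1-\Rd)}$, and $f_r = |t_r|/\sqrt{\df}$, the factors of $\sqrt{1-\Rd}$ cancel from the denominator and I obtain
$$ t(\Ry, \Rd) = \frac{f_r\sqrt{1-\Rd} + \sqrt{\Ry\Rd}}{\sqrt{(1-\Ry)/(\df-1)}}. $$
This already matches the claimed closed form once the optimal $\{\Rystar, \Rdstar\}$ are plugged in, so the remaining task is purely the maximization of this expression.

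Next I would dispatch the $\Ry$ direction by monotonicity. Holding $\Rd$ fixed, the numerator increases in $\Ry$ (through $\sqrt{\Ry\Rd}$) while the denominator $\sqrt{(1-\Ry)/(\df-1)}$ decreases in $\Ry$; hence $t$ is strictly increasing in $\Ry$ on $[0, \Rymax]$ (using $\Rymax < 1$), forcing $\Rystar = \Rymax$ in both cases. This reduces the problem to maximizing the numerator $g(\Rd) := f_r\sqrt{1-\Rd} + \sqrt{\Rymax\,\Rd}$ over $\Rd \in [0, \Rdmax]$, since the denominator no longer depends on $\Rd$. Setting $g'(\Rd)=0$ gives $\sqrt{\Rymax}\,\sqrt{1-\Rd} = f_r\sqrt{\Rd}$, i.e., the interior stationary point $\Rd = \Rymax/(f_r^2 + \Rymax)$. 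Because $g'(\Rd)\to +\infty$ as $\Rd\to 0^+$, $g'(\Rd)\to -\infty$ as $\Rd\to 1^-$, and $g$ is concave, this is the unique unconstrained maximizer on $(0,1)$. Comparing with the constraint, the interior optimum is feasible precisely when $\Rymax/(f_r^2+\Rymax)\le \Rdmax$, which rearranges to $f_r^2 \ge \Rymax(1-\Rdmax)/\Rdmax$, giving $\Rdstar = \Rymax/(f_r^2+\Rymax)$; otherwise $g$ is increasing throughout $[0,\Rdmax]$, the constraint binds, and $\Rdstar = \Rdmax$, recovering the two cases exactly as stated.

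I expect the only genuinely delicate point to be the justification of the sign choice in the bias term: one must argue that for any fixed admissible pair $\{\Ry,\Rd\}$ there exists a $Z$ realizing the adverse sign, so the $+$ branch is attainable, rather than treating $\pm$ as fixed by the data. Everything after that reduction is routine calculus, with the concavity check and the single constraint comparison being the only steps requiring genuine care.
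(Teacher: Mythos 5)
Your proposal is correct and follows essentially the same route as the paper's proof: reduce to the form $\bigl(f_r\sqrt{1-\Rd}+\sqrt{\Ry\Rd}\bigr)/\sqrt{(1-\Ry)/(\df-1)}$ by taking the adverse sign of the bias, force $\Rystar=\Rymax$ by monotonicity in $\Ry$, and then locate the unique concave stationary point in $\Rd$ versus the boundary, with the feasibility condition rearranging to exactly the stated threshold $f_r^2 \gtrless \Rymax(1-\Rdmax)/\Rdmax$. Your observation that, once $\Ry$ is fixed at its bound, the denominator drops out and the problem becomes a univariate maximization of the numerator is a slightly cleaner packaging of the same first-order-condition calculation the paper performs via partial derivatives, and your flagged concern about the attainability of the adverse sign is handled in the paper by checking that both sign cases yield the identical simplified objective.
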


If $\tmax < \tstar$, then we can be assured that no $Z$ with the specified maximum strength would be able to overturn an insignificant result. On the other hand, if $\tmax > \tstar$, we know that there exists at least one $Z$ with strength no greater than $\R^2$ that is capable of bringing the t-statistic above the specified threshold. We note that the sign of the bias term in definition \eqref{eq:abs_tstat} of $t(\Ry, \Rd)$ is resolved by pushing $t_r$ further in the direction of $\text{sign}(t_r)$. 

\begin{remark} 
\label{rmk:r2y}
It is always necessary to constrain the strength of $Z$ with respect to $Y$ in order to obtain a finite solution for $\tmax$. If $\Rymax = 1$, then as $\Ry$ approaches one, the standard error approaches zero and the t-statistic grows without bounds. 
\end{remark}

\begin{remark}
\label{rmk:r2d}
In contrast,  it is possible to leave $\Rd$ unconstrained. While the optimal value of $\Ry$ always reaches the upper bound $\Rymax$, $\Rd$ may either reach its upper bound $\Rdmax$ or result in the interior point solution $\frac{\Rymax}{f_{r} ^2 + \Rymax}$.   This happens because increasing $\Rd$ has two counterbalancing effects on the t-statistic. On one hand, it changes the point estimate, as described by~(\ref{bf}), in a direction that is favorable for rejecting $H_0:\lambda = \lambda_0$.  On the other hand, it increases standard errors due to the variance inflation factor $\frac{1}{1-\Rd}$ in~(\ref{selambda}).  Intuitively, the more variation $Z$ explains of $D$, the less residual variation of $D$ is used to estimate $\hat{\lambda}$, which becomes noisier. Eventually, the increase in standard error is so large that it exceeds the benefit of the increase in point estimate, thus reducing the t-statistic. 
\end{remark}

Of course, there naturally could be multiple latent variables instead of a single one, and so one might wonder about the case when $Z$ takes the form of a matrix rather than a vector. The following theorem demonstrates that it is sufficient to consider a single unmeasured latent variable, up to a correction in the degrees of freedom.

\begin{theorem}[$t_{\boldsymbol{R^2}}^{\max}$ for matrix $\mathbf{Z}$]
\label{thm:multi-tmax}
Let $\mathbf{Z}$ be an $(n~\times~m)$ matrix of covariates. Then the solution to $\tmax$ is the same as that of Theorem~\ref{thm:t-max}, save for the adjustment in the degrees of freedom, which now is $\df-m$. 
\end{theorem}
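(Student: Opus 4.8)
The plan is to reduce the matrix case to the scalar case of Theorem~\ref{thm:t-max} by invoking the multivariate generalization of the omitted-variable-bias formulas of Theorem~\ref{thm:ovb}. The essential observation is that the influence of the whole block $\mathbf{Z}$ on $\hat{\lambda}$ and on $\widehat{\se}(\hat{\lambda})$ depends on $\mathbf{Z}$ only through the two \emph{block} partial $R^2$ measures $\Rymv$ and $\Rdmv$, together with the number of columns $m$. Accordingly, my first step is to record the multivariate versions of (\ref{bf}) and (\ref{selambda}) from \citet{cinelli2022omitted}, namely
\begin{align*}
|\hat{\lambda}_r - \hat{\lambda}| &= \sqrt{\frac{\Rymv\,\Rdmv}{1-\Rdmv}}\times \widehat{\se}(\hat{\lambda}_r)\times\sqrt{\df},\\
\widehat{\se}(\hat{\lambda}) &= \sqrt{\frac{1-\Rymv}{1-\Rdmv}}\times \widehat{\se}(\hat{\lambda}_r)\times\sqrt{\frac{\df}{\df-m}}.
\end{align*}
The only change relative to the scalar case is that the degrees-of-freedom correction in the standard error becomes $\sqrt{\df/(\df-m)}$ rather than $\sqrt{\df/(\df-1)}$, reflecting the $m$ extra coefficients estimated in the long regression. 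Crucially, the factor $\sqrt{\df}$ in the bias formula is \emph{unchanged}, since it merely cancels the degrees of freedom hidden in the restricted standard error $\widehat{\se}(\hat{\lambda}_r)$, converting the product into a df-free ratio of residual norms that is anchored to the restricted fit.

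With these two identities in hand, the second step is to form the adjusted t-statistic exactly as before,
$$t(\Rymv,\Rdmv)=\frac{|(\hat{\lambda}_r-\lambda_0)\pm \BF\times\widehat{\se}(\hat{\lambda}_r)\times\sqrt{\df}|}{\widehat{\se}(\hat{\lambda}_r)\times\SEF\times\sqrt{\df/(\df-m)}},$$
now with $\BF=\sqrt{\Rymv\Rdmv/(1-\Rdmv)}$ and $\SEF=\sqrt{(1-\Rymv)/(1-\Rdmv)}$. Factoring $\widehat{\se}(\hat{\lambda}_r)\sqrt{\df}$ out of the numerator and cancelling, this collapses to $t(\Rymv,\Rdmv)=\sqrt{\df-m}\cdot g(\Rymv,\Rdmv)$, where $g$ is the \emph{same} function of the two partial $R^2$ arguments that arises in the scalar derivation (it carries $f_r=|t_r|/\sqrt{\df}$, which is defined through the restricted model and is therefore untouched). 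In other words, replacing the vector $Z$ by the matrix $\mathbf{Z}$ leaves the objective unchanged up to the multiplicative constant $\sqrt{\df-m}$ in place of $\sqrt{\df-1}$.

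The final step is the optimization itself. Because $\sqrt{\df-m}$ is a positive constant independent of the optimization variables $\Rymv$ and $\Rdmv$, maximizing $t(\Rymv,\Rdmv)$ subject to $\Rymv\le\Rymax$ and $\Rdmv\le\Rdmax$ is equivalent to maximizing $g$ over the identical rectangle. This is verbatim the program solved in Theorem~\ref{thm:t-max}; hence the maximizer $\{\Rystar,\Rdstar\}$ and the case split governed by whether $f_r^2<\Rymax(1-\Rdmax)/\Rdmax$ are the same, and the optimal value is obtained from the scalar closed form simply by substituting $\df-m$ for $\df-1$.

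I expect the only genuine work to lie in the first step: establishing the multivariate bias and standard-error identities with the correct bookkeeping of degrees of freedom. The conceptual content is that the joint effect of all $m$ columns of $\mathbf{Z}$ collapses onto the two scalar summaries $\Rymv$ and $\Rdmv$, with the remaining orthogonal directions in the span of $\mathbf{Z}$ influencing only the degrees-of-freedom count, which is precisely the multivariate extension in \citet{cinelli2022omitted}. Once that reduction is granted, everything downstream is an unchanged repetition of the scalar argument, so no new optimization analysis is required.
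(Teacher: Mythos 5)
Your reduction of the matrix problem to the scalar optimization is sound in outline, but the step you yourself identify as ``the only genuine work''---the multivariate bias identity---is exactly where the argument has a gap. The displayed equation
\begin{align*}
|\hat{\lambda}_r - \hat{\lambda}| \;=\; \sqrt{\frac{\Rymv\,\Rdmv}{1-\Rdmv}}\times \widehat{\se}(\hat{\lambda}_r)\times\sqrt{\df}
\end{align*}
is \emph{not} an identity for a general matrix $\mathbf{Z}$; it is only an attainable upper bound. The bias depends on $\mathbf{Z}$ solely through the single fitted direction $Z_L := \mathbf{Z}\hat{\gamma}$, and while $\Rylz = \Rymv$, one only has $\Rdlz \leq \Rdmv$, with strict inequality whenever the linear combination of columns that best predicts $Y$ is not the one best aligned with $D$. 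Consequently, two matrices with identical block partial $R^2$ values can induce different biases, so the quantity $t(\Rymv,\Rdmv)$ you form in your second step is not a well-defined function of those two arguments. (The standard-error formula, by contrast, \emph{is} an exact identity in the block partial $R^2$'s up to the $\df - m$ correction, so that part is fine.) This subtlety is precisely what the paper's proof handles: it introduces $Z_L$, observes that the bias of $\mathbf{Z}$ equals that of $Z_L$ and that $\Rdlz \leq \Rdmv$, notes that the bias is strictly increasing in $\Rdlz$, and concludes that the adversarial $\mathbf{Z}$ saturates the bound---after which the scalar Theorem~\ref{thm:t-max} applies with the degrees of freedom replaced by $\df - m$.

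Because the object being computed is a maximum over all $\mathbf{Z}$ of bounded strength, your conclusion survives: the upper bound on the bias is achieved by a worst-case $\mathbf{Z}$, so maximizing the bound is equivalent to maximizing the true t-statistic, and your downstream argument---factoring out $\widehat{\se}(\hat{\lambda}_r)\sqrt{\df}$, collapsing to $\sqrt{\df-m}$ times the same function $g$ of the two $R^2$ arguments, and rerunning the unchanged optimization---goes through verbatim. But as written, the first step asserts a false equality; you need to replace ``identity'' with ``attainable upper bound'' and supply the $Z_L$ argument (or an equivalent worst-case construction) to justify attainability.
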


In what follows, for simplicity we keep $Z$ as a vector with the understanding that all results still hold for matrix $\mathbf{Z}$. 
But before moving forward, there is an interesting corollary of the previous result. 
It characterizes the maximum extent to which one can p-hack results by testing different subsets of a fixed set of observed covariates $\mathbf{X}$.

\begin{corollary}[Upper bound on the t-statistic for any subset of covariates] \label{cor-phacking} For observed covariates $\mathbf{X}$, let $\Ryx, \Rdx$ denote
the strengths of the associations of  $\mathbf{X}$ with $Y$ and $D$, respectively. Let $t_{Y\sim D|\mathbf{X}_{S}}$ denote the t-statistic for the coefficient of the regression of $Y$ on $D$ when adjusting for a subset of covariates $\mathbf{X}_{S}$ (consisting of a subset of the columns of $\mathbf{X}$). Then for any $\mathbf{X}_{S}$,
$$
|t_{Y\sim D|\mathbf{X}_{S}}| \leq \tmax,
$$ 
where $\tmax$ is the solution of Theorem~\ref{thm:t-max} using $t_r = t_{Y\sim D}$ and $\R^2 = \{R^2_{Y \sim \mathbf{X}|D}, R^2_{D \sim \mathbf{X}}\}$.
\end{corollary}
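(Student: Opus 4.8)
The plan is to recognize the corollary as an instance of Theorem~\ref{thm:multi-tmax}, taking the bivariate regression of $Y$ on $D$ (and a constant) as the ``restricted'' model and the columns of a candidate subset $\mathbf{X}_S$ as the added variables $Z$. Under this identification $t_r = t_{Y\sim D}$, and the partial $R^2$ values governing the adjustment are $\{R^2_{Y\sim\mathbf{X}_S|D},\, R^2_{D\sim\mathbf{X}_S}\}$. Since the t-statistic attached to a partial correlation is symmetric in the two correlated variables, the t-statistic for $Y$ in the regression of $D$ on $(Y,\mathbf{X}_S)$ equals the t-statistic for $D$ in the regression of $Y$ on $(D,\mathbf{X}_S)$, so Theorem~\ref{thm:ovb} applies verbatim with $Z=\mathbf{X}_S$.

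First I would establish that enlarging the regressor set cannot decrease explained variation, so that $R^2_{D\sim\mathbf{X}_S}\le\Rdx$ and---via the incremental-$R^2$ identity $R^2_{Y\sim\mathbf{X}_S|D}=(R^2_{Y\sim(D,\mathbf{X}_S)}-R^2_{Y\sim D})/(1-R^2_{Y\sim D})$ and the same monotonicity for $R^2_{Y\sim(D,\mathbf{X}_S)}$---also $R^2_{Y\sim\mathbf{X}_S|D}\le\Ryx$. Hence, for every subset $S$, the actual strengths lie in the feasibility region $\{\Ry\le\Ryx,\ \Rd\le\Rdx\}$. The realized statistic $t_{Y\sim D|\mathbf{X}_S}$ is exactly $t(\Ry,\Rd)$ evaluated at these strengths (with its realized bias sign), so because $\tmax$ maximizes $t(\Ry,\Rd)$ over precisely this region while also selecting the most favorable bias sign, we obtain $t_{Y\sim D|\mathbf{X}_S}\le\tmax$---provided the degrees-of-freedom correction used in $\tmax$ matches that of the long regression containing $\mathbf{X}_S$.

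The main obstacle is this degrees-of-freedom bookkeeping. A subset with $m_S$ genuine covariates induces a long-regression correction of $m_S$ by Theorem~\ref{thm:multi-tmax}, and since different subsets consume different df, no single formula can literally share one df across all $S$. I would resolve this by showing $\tmax$ is decreasing in the correction $m$: with $\df-1$ replaced by $\df-m$ in the formula of Theorem~\ref{thm:t-max}, the optimizers $\{\Rystar,\Rdstar\}$ and the numerator $f_r\sqrt{1-\Rdstar}+\sqrt{\Rystar\Rdstar}$ depend only on $f_r$ (which uses the fixed restricted df) and not on $m$, whereas the denominator $\sqrt{(1-\Rystar)/(\df-m)}$ strictly increases in $m$. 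Hence the value with any correction $m_S\ge 1$ is dominated by the value with correction $1$, which is exactly the single-variable solution of Theorem~\ref{thm:t-max}. Chaining this with the previous two facts gives $t_{Y\sim D|\mathbf{X}_S}\le\tmax$ for every $S$, where $\tmax$ is built from $t_r=t_{Y\sim D}$ and $\R^2=\{\Ryx,\Rdx\}$; taking the supremum over subsets yields the stated uniform bound on how much one can inflate the t-statistic by selecting among the observed covariates.
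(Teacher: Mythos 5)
Your proof is correct and follows essentially the same route as the paper's: monotonicity of $R^2$ over nested regressor sets places every subset's realized strengths inside the feasibility region $\{\Ry \leq \Ryx,\ \Rd \leq \Rdx\}$, and the multivariate subset is reduced to a single adversarial vector via Theorem~\ref{thm:multi-tmax} (with the caveat, which you and the paper both note in different ways, that the realized $\Rd$ constraint need not be tight). The one place you go beyond the paper's one-line argument is in explicitly verifying that $\tmax$ is decreasing in the degrees-of-freedom correction $m$, so that the single-variable formula of Theorem~\ref{thm:t-max} dominates every subset regardless of its size; that detail is left implicit in the paper and your verification of it is sound.
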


When the number of covariates is small, it is feasible to run all possible regressions to identify the exact maximum t-statistic across all specifications. However, when the number of covariates is large, this exhaustive approach becomes impractical. For example, with $p = 40$, there are $2^{40}$ (approximately 1 trillion) possible specifications. {In such a case, $\tmax$ offers a simple upper bound on the maximum t-value that one could obtain via specification searches, without the need to actually run all 1 trillion regressions.}

\begin{example}
    Let $p=40$, $\df=100$ and the t-statistic of the regression of $Y$ on $D$ be equal to $1$.  Then, if $\{R^2_{Y \sim \mathbf{X}|D}, R^2_{D \sim \mathbf{X}}\} = \{0.08, 0.08\}$, Corollary~\ref{cor-phacking} assures us that none of the 1 trillion specifications can yield a t-statistic greater than $1.83$. 
\end{example}

\begin{remark}
    Note that the set of covariates $\mathbf{X}$ is arbitrary and can  thus accommodate any nonlinear transformations of the original variables. For example, if we again consider an original set of $p=40$ covariates, but also logarithms, squares, and pairwise interactions, Corollary~\ref{cor-phacking} can be applied to characterize the maximum adjusted t-statistic of the $2^{3p + p(p-1)/2}$ possible specifications, including these nonlinear transformations.
\end{remark}

\begin{remark}
    Note that $\tmax$ is achievable when the only constraint on the variables to be included is their maximum explanatory power. For any given set of observed covariates, $\tmax$ is a potentially loose upper bound. It is possible to tighten this bound by applying the corollary iteratively within subsets of regressions.
    Obtaining tight bounds without running all of the $2^p$ possible regressions remains an open problem.
\end{remark}

\subsection{On the minimal strength of \texorpdfstring{$Z$}{Z} to reverse statistical insignificance}

Equipped with the notion of the maximum adjusted \mbox{t-statistic}, we can now characterize the minimum strength of $Z$ to obtain a statistically significant result. Following  the convention in \cite{cinelli_making_2020, cinelli2025omitted} we call our metrics ``robustness values for insignificance.'' They quantify how ``robust'' an insignificant result is to the inclusion of covariates in the regression equation.\footnote{Our metrics complement the ``robustness value'' (RV) and ``extreme robustness value'' (XRV) introduced in \cite{cinelli_making_2020, cinelli2025omitted}, which deal with the opposite problem  and quantify the minimum strength of $Z$ to turn a \emph{significant result insignificant}. See the supplementary material for further discussion.}

\subsubsection{Extreme robustness value for insignificance.}
As highlighted in Remark~\ref{rmk:r2y}, the parameter $\Ry$ is essential for assessing the potential of $Z$ to bring about a significant result, as it always needs to be bounded. Thus we begin by characterizing the minimal strength of association of $Z$ with $Y$ \emph{alone} in order to achieve significance.

\begin{definition}[Extreme Robustness Value for Insignificance]
    For fixed $\Rdmax \in [0,1]$, the extreme robustness value for insignificance, $\XRVI^{\Rdmax}_{\alpha}$, is the minimum upper bound on $\Ry$ such that 
    $\tmax$ is large enough to reject null hypothesis $H_0: \lambda = \lambda_0$ at specified significance level $\alpha$, i.e.,
    \begin{align*}
        \XRVI^{\Rdmax}_{\alpha} &:= \min \{ \text{XRVI}:  t_{\text{XRVI},\Rdmax}^{\max} \geq \tstar \}.
    \end{align*}
\end{definition}

For a fixed bound on $\Rd$, the $\XRVI^{\Rdmax}_{\alpha}$ describes how robust an insignificant result is in terms of the minimum explanatory power that $Z$ needs to have with $Y$ in order to overturn it.
Theorem~\ref{thm:xriv-r} in the appendix provides an analytical expression for $\XRVI^{\Rdmax}_{\alpha}$ given arbitrary $\Rdmax\in[0,1]$. Here we focus on two important cases: $\Rdmax=0$ and $\Rdmax=1$.

Starting with $\Rdmax=0$, we first consider the scenario where the point estimate remains unchanged, and any increase in the t-statistic occurs solely due to a reduction in the standard error. That is, $\XRVI^{0}_{\alpha}$ quantifies how much variation a control variable  $Z$ that is uncorrelated with $D$ must explain of the dependent variable $Y$ in order to overturn a previously insignificant result.  This turns out to have  a remarkably simple and insightful characterization.

\begin{theorem}[Closed-form expression for $\XRVI^{0}_{\alpha}$]
\label{thm:XRVI-0}
    Let $f_{r} >0$, then the analytical solution for $\XRVI^{0}_{\alpha}$ is
    \begin{align*}
        \XRVI^{0}_{\alpha}
        &= 
        \begin{cases}
            0, &\text{if } \fstar \leq f_{r},  \\
            \displaystyle 1-\left(\frac{f_{r} }{\fstar}\right)^2,
            &\text{otherwise.} \\
        \end{cases}
    \end{align*}
    If $f_{r} = 0$, there is no value of $\Ry$ capable of overturning an insignificant result.
\end{theorem}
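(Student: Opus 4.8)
The plan is to specialize the closed-form expression for $\tmax$ in Theorem~\ref{thm:t-max} to the case $\Rdmax = 0$ and then invert the defining inequality $\tmax \geq \tstar$. First I would observe that setting $\Rdmax = 0$ forces $\Rd = \Rdstar = 0$, since a sample partial $R^2$ is always nonnegative; equivalently, the branch condition $f_{r}^2 < \Rymax(1-\Rdmax)/\Rdmax$ of Theorem~\ref{thm:t-max} holds vacuously (its right-hand side is $+\infty$ for $\Rymax>0$), so $\{\Rystar, \Rdstar\} = \{\Rymax, 0\}$ rather than an interior solution. Substituting $\Rdstar = 0$ and $\Rystar = \Rymax$ into the formula collapses the bias term $\sqrt{\Rystar\Rdstar}$ to zero and yields the one-parameter expression
$$\tmax = f_{r}\sqrt{\frac{\df-1}{1-\Rymax}}.$$

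Next, I would translate the definition of $\XRVI^{0}_{\alpha}$ into an inequality in the bound $\Rymax$. Using $\tstar = \fstar\sqrt{\df-1}$, which follows from $f^*_{\alpha,\df-1} = t^*_{\alpha,\df-1}/\sqrt{\df-1}$, the requirement $\tmax \geq \tstar$ simplifies, after cancelling the common factor $\sqrt{\df-1}$, to $f_{r}/\sqrt{1-\Rymax} \geq \fstar$. Since the left-hand side is continuous and strictly increasing in $\Rymax$ on $[0,1)$ whenever $f_{r}>0$, the minimal feasible bound is attained either at the lower endpoint $\Rymax=0$ or at the unique value where equality holds; this monotonicity is precisely what reduces the minimization to solving a single scalar equation.

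Finally, I would carry out the case split. If $\fstar < f_{r}$, the inequality already holds at $\Rymax = 0$, giving $\XRVI^{0}_{\alpha} = 0$. Otherwise, solving $f_{r} = \fstar\sqrt{1-\Rymax}$ by squaring yields $\Rymax = 1 - (f_{r}/\fstar)^2$, which lies in $[0,1)$ exactly when $\fstar \geq f_{r}$, matching the stated formula. For the degenerate case $f_{r} = 0$, the numerator of $\tmax$ vanishes identically (the bias term is zero because $\Rdstar = 0$), so $\tmax = 0 < \tstar$ for every $\Rymax < 1$ and no admissible strength of $Z$ can overturn insignificance. The only genuine subtlety I anticipate is the degrees-of-freedom bookkeeping---keeping the critical value at $\df-1$ and routing it through $\fstar$ so the $\sqrt{\df-1}$ factors cancel cleanly---together with the short argument that $\Rdmax = 0$ forces the corner value $\Rdstar = 0$ rather than the interior point solution of Theorem~\ref{thm:t-max}.
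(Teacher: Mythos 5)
Your proposal is correct and follows essentially the same route as the paper's proof: with $\Rdmax=0$ the bias term vanishes, $\tmax$ reduces to $f_r\sqrt{\df-1}/\sqrt{1-\Ry}$, which is increasing in $\Ry$, and solving the equality with $\tstar=\fstar\sqrt{\df-1}$ gives $1-(f_r/\fstar)^2$; your handling of the $f_r=0$ and $\fstar<f_r$ cases also matches. The only cosmetic difference is that you route the argument through the branch condition of Theorem~\ref{thm:t-max}, whereas the paper simply notes directly that $\Rd=0$ collapses the objective.
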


\begin{remark} 
\label{rmk:xrv0}
It is useful to understand how $\XRVI^{0}_{\alpha}$ changes as the sample size grows, when keeping the observed t-statistic and the significance level fixed.

\begin{align*}
    \XRVI^{0}_{\alpha} 
    \approx 1- \left(\frac{\tobs}{\tstar} \right)^2  
    \xrightarrow[\enskip\infty\enskip]{\df} \enspace 1- \left(\frac{\tobs}{z^*_{\alpha}} \right)^2,
\end{align*}
where here $ \xrightarrow[\enskip\infty\enskip]{\df}$ denotes the limit as $\df$ goes to infinity and $z^*_{\alpha}$ denotes the $(1-\alpha/2)$ quantile of the standard normal distribution. In other words, when considering only a reduction in the standard error, the amount of residual variation that $Z$ must explain of $Y$ in order to overturn an insignificant result depends solely on the ratio of the observed t-statistic to the critical threshold. Apart from changes in the critical threshold due to degrees of freedom---which eventually converges to $z^*_{\alpha}$---this value remains constant regardless of sample size.
\end{remark}

\begin{example}
     Consider testing the null hypothesis of zero effect with an observed t-statistic of $1$ and $100$ degrees of freedom.  The percentage of residual variation of $Y$ that $Z$ needs to explain in order to bring a t-statistic of 1 to the critical threshold of 2, \emph{only through a reduction in the standard error}, is
    $$
    \XRVI^{0}_{\alpha} \approx 1 - \left(\frac{1}{2}\right)^2 = 1 - (1/4) = 3/4 = 75\%.
    $$
    That is, if we are considering a reduction in the standard error alone, $Z$ needs to explain at least 75\% of the variation of $Y$ in order to elevate the observed t-statistic to~2. Notably, this number 
    is (virtually) the same across sample sizes, be it $\df = 100$, $\df = 1,000$ or $\df=1,000,000$. As variables that explain 75\% of the variation of $Y$ are rare in most settings, this simple fact suggests that it should also be rare to see a reversal of significance driven by gains in precision when $t_r =1$. We return to this point in the discussion.
\end{example}

The previous result describes how to achieve statistical significance via precision gains. We now move to the case with $\Rdmax=1$.  As noted in Remark~\ref{rmk:r2d}, this is the scenario in which we \emph{impose no constraints} on the strength of association between $Z$ and $D$. In this sense, $\XRVI^{1}_{\alpha}$ computes the \emph{bare minimum} amount of variation that $Z$ needs to explain of $Y$ in order to reverse an insignificant result. Any variable that does not explain at least $(100 \times \XRVI^{1}_{\alpha}) \%$ of the variation of $Y$ is logically incapable of making the t-statistic significant. 

\begin{theorem}[Closed-form expression for $\XRVI^{1}_{\alpha}$]
\label{thm:XRVI-1}
    The analytical solution for $\XRVI^{1}_{\alpha}$ is:
    \begin{align*}
        \XRVI^{1}_{\alpha}
        &= 
        \begin{cases}
            0, &\text{if } \fstar \leq f_{r},  \\
            \dfrac{\fstarsq - f_{r} ^2}{1+\fstarsq}, &\text{otherwise.} \\
        \end{cases}
    \end{align*}
\end{theorem}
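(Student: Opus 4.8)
The plan is to substitute R_{D,max}^2 = 1 into the closed-form for the maximum adjusted t-statistic from Theorem~\ref{thm:t-max}, simplify, and then solve the threshold equation t^{\max} = t^*_{\alpha,\df} for the upper bound on R^2_{Y}. The key observation, emphasized in Remark~\ref{rmk:r2d}, is that with R_{D,max}^2 = 1 unconstrained, the optimization will never sit at the boundary R_D^2 = 1 (there the variance inflation factor blows up the standard error), so we are guaranteed to be in the \emph{interior} case of Theorem~\ref{thm:t-max}. This pins down R_D^{*2} = R_Y^{\max}/(f_r^2 + R_Y^{\max}), which is the simplification that makes the whole computation tractable.

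Let me first handle the trivial branch. If f^*_{\alpha,\df} < f_r, then |t_r| already exceeds t^*_{\alpha,\df}, the result is already significant, and no variable is needed, so XRVI^1 = 0. This matches the stated first case. For the remaining branch, I would proceed as follows.

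\textbf{Step 1.} Write $R := R_Y^{\max}$ and substitute the interior solution $R_D^{*2} = R/(f_r^2 + R)$ into the closed form for $\tmax$. The plan is to simplify the three ingredients separately: the factor $1 - R_D^{*2} = f_r^2/(f_r^2 + R)$; the product $R_Y^{*2} R_D^{*2} = R^2/(f_r^2 + R)$; and the denominator $\sqrt{(1-R)/(\df-1)}$. After substitution the numerator $f_r\sqrt{1 - R_D^{*2}} + \sqrt{R_Y^{*2}R_D^{*2}}$ should collapse, since both terms carry a common factor of $1/\sqrt{f_r^2+R}$, giving numerator $= (f_r^2 + R)/\sqrt{f_r^2+R} = \sqrt{f_r^2 + R}$. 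I expect
\begin{align*}
    \tmax = \frac{\sqrt{f_r^2 + R}}{\sqrt{(1-R)/(\df-1)}} = \sqrt{(\df-1)\,\frac{f_r^2 + R}{1 - R}}.
\end{align*}

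\textbf{Step 2.} Set this equal to the critical value $t^*_{\alpha,\df-1}$ and solve for $R$. Dividing both sides by $\sqrt{\df-1}$ converts $t^*_{\alpha,\df-1}/\sqrt{\df-1}$ into $f^*_{\alpha,\df-1}$, so the equation becomes $(f_r^2 + R)/(1-R) = f^{*2}_{\alpha,\df-1}$. This is linear in $R$: clearing the denominator gives $f_r^2 + R = f^{*2}_{\alpha,\df-1}(1 - R)$, hence $R(1 + f^{*2}_{\alpha,\df-1}) = f^{*2}_{\alpha,\df-1} - f_r^2$, yielding exactly
\begin{align*}
    \XRVI^1_\alpha = \frac{f^{*2}_{\alpha,\df-1} - f_r^2}{1 + f^{*2}_{\alpha,\df-1}},
\end{align*}
which is the claimed expression. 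The degrees-of-freedom index shifts from $\df$ to $\df-1$ because adding $Z$ consumes one degree of freedom, consistent with the $\sqrt{\df/(\df-1)}$ correction in~(\ref{selambda}) and the use of $\fstar$ rather than $f^*_{\alpha,\df}$ in Theorem~\ref{thm:t-max}.

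\textbf{Main obstacle.} The only genuine subtlety is \emph{justifying that we are in the interior regime} rather than the boundary regime of Theorem~\ref{thm:t-max}. With $R_D^{\max}=1$, the boundary condition $f_r^2 < R_Y^{\max}(1 - R_D^{\max})/R_D^{\max}$ reads $f_r^2 < 0$, which is impossible, so the interior case always applies; I would state this explicitly. One should also verify that the solved $\XRVI^1_\alpha$ is a genuine \emph{minimum} and lies in $[0,1]$: since $\tmax$ is increasing in $R$ (the map $R \mapsto (f_r^2+R)/(1-R)$ is strictly increasing on $[0,1)$), the threshold is crossed at a unique point, confirming both that the infimum is attained and that the value is the minimal admissible bound. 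Nonnegativity follows precisely from the assumption $f_r \le f^*_{\alpha,\df-1}$ defining this branch.
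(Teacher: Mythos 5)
Your proposal is correct and follows essentially the same route as the paper's proof: dispose of the trivial branch, observe that $\Rdmax=1$ forces the interior-point case of Theorem~\ref{thm:t-max} (since the boundary condition would require $f_r^2<0$), use monotonicity of $\tmax$ in the bound on $\Ry$ to reduce to solving $\fstar=\sqrt{(f_r^2+\XRVI)/(1-\XRVI)}$ at equality, and extract the linear solution. Your version merely spells out the algebraic collapse of the numerator and the degrees-of-freedom bookkeeping that the paper leaves implicit.
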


\begin{remark} 
\label{rmk:xrv1}
Contrary to the previous case, we observe the following behaviour as the sample size grows,
\begin{align*}
    \XRVI^{1}_{\alpha} \approx  \left(\dfrac{\tstarsq-\tobs^2}{\df + \tstarsq}\right)  \xrightarrow[\enskip\infty\enskip]{\df} \enspace 0.
\end{align*}
Therefore, if we allow $Z$ to change point estimates, then for a fixed observed t-statistic, the minimal strength of $Z$ with $Y$ to bring about a reversal tends to zero as the sample size grows to infinity. 
\end{remark}

\begin{example}
\label{ex:tof1}
    Consider again testing the null hypothesis of zero effect with an observed t-statistic of $1$ and $100$ degrees of freedom. If we allow $Z$ to be arbitrarily associated with $D$, it needs only to explain 2.9\% of the residual variation of $Y$ in order to bring the t-statistic to 2:
    $$
    \XRVI^{1}_{\alpha} \approx \frac{2^2-1^2}{100 + 2^2} = \frac{3}{104} = 2.9\%.
    $$
    Also note that any $Z$ that explains less than 2.9\% of the variation of $Y$ is logically incapable of bringing about  such change. Corroborating our previous analysis, the $Z$ that achieves this must do  so via an increase in point estimate, and not via a decrease in standard errors. 
    As per Theorem~\ref{thm:t-max}, the optimal value of the association with $D$ is $\Rd\approx 74\%$. Notice that $\SEF \approx 1.94$, meaning that the inclusion of $Z$ almost \emph{doubles} the standard error, instead of reducing it. This, however, is compensated by the fact that $Z$ increases the point estimate by a factor of $1+\BF \times \sqrt{\df} = 3.88$, thus doubling the t-statistic despite the loss in precision.
        
\end{example}
\begin{example}
\label{ex:tof1df1000}
    For the same observed t-statistic of~1, consider a sample size that is an order of magnitude larger, say, $\df = 1,000$.  The minimum residual variation that $Z$ needs to explain of $Y$ then reduces to 0.29\%:
    $$
    \XRVI^{1}_{\alpha} \approx \frac{2^2-1^2}{1000 + 2^2} = \frac{3}{1004} = 0.29\%.
    $$
    As per Theorem~\ref{thm:t-max}, this $Z$ has an association with $D$ of $\Rd\approx 75\%$. Here, we have the same situation as before: the standard error doubles while the point estimate increases by a factor of four, thus doubling the t-statistic. 
    
\end{example}

\subsubsection{Robustness value for insignificance.} While in the previous section we investigated the minimal bound on $\Ry$ alone in order to reverse an insignificant result, here we investigate the minimal bound on both $\Ry$ and $\Rd$ simultaneously.

\begin{definition}[Robustness Value for Insignificance]
The robustness value for insignificance, $\RVI_{\alpha}$, is the minimum upper bound on both $\Ry$ and $\Rd$ such that 
$\tmax$ is large enough to reject the null hypothesis $H_0: \lambda = \lambda_0$ at specified significance level $\alpha$. That is,
    \begin{align*}
        \RVI_{\alpha} &:= \min \{ \RVI:  t_{\RVI,\RVI}^{\max} \geq \tstar \}. 
    \end{align*}
\end{definition}

Note that $\RVI_{\alpha}$ provides a convenient summary of the minimum strength of association that $Z$ needs to have, \emph{jointly} with $D$ and $Y$, in order to bring about a statistically significant result. Any $Z$ that has both partial $R^2$ values no stronger than $\RVI_{\alpha}$ cannot reverse a statistically insignificant finding. On the other hand,  we can always find a $Z$ with both partial $R^2$ values at least as strong as $\RVI_{\alpha}$ that does so. The solution of this problem is given in the following result.

\begin{theorem} [Closed-form expression for $\RVI_{\alpha}$]
\label{thm:RVI}
    The analytical solution for $\RVI_{\alpha}$ is
    \begin{align*}
        \RVI_{\alpha}
        &= 
        \begin{cases}
            0, &\text{if } \fstar \leq f_{r},  \\
            \tfrac{1}{2}\left({\scriptstyle  \sqrt{f_\Delta^4 + 4f_\Delta^2}-f_\Delta^2 }\right), &\text{if } f_{r} < \fstar < f_{r}^{-1}, \\
            \XRVI^{1}_{\alpha}, &\text{otherwise} \\
        \end{cases}
    \end{align*} 
    where $f_{\Delta} := \fstar - f_{r} $. 
\end{theorem}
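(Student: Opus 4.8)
The plan is to reduce the two-dimensional robustness value to a one-dimensional search by setting $\Rymax = \Rdmax = q$ in Theorem~\ref{thm:t-max}, and then to locate the unique $q$ at which the (monotone) curve $q \mapsto t_{q,q}^{\max}$ first crosses the threshold $\tstar = \sqrt{\df-1}\,\fstar$. The first observation I would make is that, under this substitution, the regime condition $f_{r}^2 < \Rymax(1-\Rdmax)/\Rdmax$ of Theorem~\ref{thm:t-max} collapses to the clean inequality $q < 1 - f_{r}^2$. This splits the analysis into a ``boundary'' regime, where both constraints bind and $\Rystar = \Rdstar = q$, and an ``interior'' regime, where the optimal $\Rd$ sits strictly below $q$.

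In the boundary regime I would substitute $\Rystar = \Rdstar = q$ into the closed form of Theorem~\ref{thm:t-max} and simplify; the $\sqrt{1-q}$ factors cancel neatly to give $t_{q,q}^{\max} = \sqrt{\df-1}\,\bigl(f_{r} + q/\sqrt{1-q}\bigr)$. Setting this equal to $\tstar$ reduces the problem to $q/\sqrt{1-q} = f_{\Delta}$, whose positive root, after squaring and applying the quadratic formula, is exactly the middle-case expression $\tfrac{1}{2}(\sqrt{f_{\Delta}^4 + 4f_{\Delta}^2} - f_{\Delta}^2)$. In the interior regime the constraint on $\Rd$ is slack---one checks that the interior optimum $q/(f_{r}^2+q) \le q$ precisely when $q \ge 1 - f_{r}^2$---so the $\Rd$ bound plays no role and the program becomes identical to the one defining $\XRVI^{1}_{\alpha}$ with unconstrained $\Rd$. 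Hence $\RVI_{\alpha} = \XRVI^{1}_{\alpha}$ there, which is already solved in Theorem~\ref{thm:XRVI-1}.

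The crux, and where I expect to spend the most care, is matching each candidate to its correct regime. Because $t_{q,q}^{\max}$ is continuous and strictly increasing in $q$---both summands grow in the boundary regime, and the interior regime is simply the increasing $\XRVI^{1}$-curve in its explanatory-power bound---the crossing point is unique, so it suffices to decide on which side of $q = 1 - f_{r}^2$ it lies. Using the monotonicity of $g(q) := q/\sqrt{1-q}$ together with the value $g(1-f_{r}^2) = f_{r}^{-1} - f_{r}$, the boundary root satisfies $q < 1 - f_{r}^2$ if and only if $f_{\Delta} < f_{r}^{-1} - f_{r}$, i.e. $\fstar < f_{r}^{-1}$, which carves out the middle case. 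Symmetrically, a short algebraic check shows $\XRVI^{1}_{\alpha} = (\fstarsq - f_{r}^2)/(1+\fstarsq) \ge 1 - f_{r}^2$ exactly when $f_{r}\fstar \ge 1$, i.e. $\fstar \ge f_{r}^{-1}$, confirming the ``otherwise'' branch is genuinely interior and that the two formulas agree at $\fstar = f_{r}^{-1}$ (both equal $1-f_{r}^2$), establishing continuity. Finally, the trivial branch $\fstar < f_{r}$ follows immediately, since then $t_{0,0}^{\max} = \sqrt{\df-1}\,f_{r} \ge \tstar$ already at $q=0$, forcing $\RVI_{\alpha} = 0$.
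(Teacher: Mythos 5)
Your proposal is correct and follows essentially the same route as the paper's proof: substituting $\Rymax=\Rdmax$ into $\tmax$, solving the boundary equality $\fstar = f_{r} + \RVI/\sqrt{1-\RVI}$ for the quadratic root, and showing the interior/boundary regime switch is governed by $\fstar \lessgtr f_{r}^{-1}$. Your additional checks of monotonicity, uniqueness of the crossing, and continuity at $\fstar = f_{r}^{-1}$ are left implicit in the paper but are consistent with and slightly strengthen its argument.
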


\begin{remark}
\label{rmk:rvi-explanation}

The first case in Theorem~\ref{thm:RVI} occurs when the t-statistic for $H_0:\lambda = \lambda_0$ is already statistically significant, even when losing one degree of freedom. The second case occurs when both constraints on $\Rd$ and $\Ry$ are binding. The third case is the interior point solution, as defined in Theorem~\ref{thm:XRVI-1}, where only the constraint on $\Ry$ is binding. 
\end{remark}

Notice that in the second solution of \( \text{RVI}_{\alpha} \), we have \(\Rd = \Ry\); thus, \(\SEF = 1\) and standard errors remain unchanged. Therefore, \( \text{RVI}_{\alpha} \) represents the minimal strength of \( Z \) needed to achieve statistical significance via a \emph{change in the point estimate alone}, usefully complementing \(\XRVI^0_{\alpha}\), which quantifies the minimal strength of \( Z \) needed solely through a reduction in standard errors.

\begin{remark}
We recover $\XRVI^1_{\alpha}$ as the solution to $\RVI_{\alpha}$  if and only if the conditions (1) $\fstar > f_r^{-1}$ and (2) $\fstar > f_r$ both hold. This rarely occurs. To see this more clearly, note that condition
(1) simplifies to $\sqrt{\df-1}\sqrt{\df} < t_r\times {\tstar}$,  
or, approximately,
$$
\df \lessapprox t_r \times \tstar
$$
which only occurs when there are few degrees of freedom, for typical critical thresholds (e.g. 1.96).
\end{remark}

\begin{remark}
\label{rmk:rvi-df}
As with $\XRVI^1_{\alpha}$, for fixed $t_r$ and significance level $\alpha$, we observe the same behaviour for $\RVI_{\alpha}$ as the sample size grows,
    \begin{align*}
    \RVI_{\alpha}  \approx \frac{1}{2}\left(\sqrt{\frac{t_\Delta^4}{\df^2} + 4\frac{t_\Delta^2}{\df}} - \frac{t_\Delta^2}{\df}\right)  \xrightarrow[\enskip\infty\enskip]{\df} \enspace 0,
\end{align*}
where $t_\Delta = \tstar - \tobs$. Therefore, the larger the sample size, any change in the point estimate will eventually be sufficiently strong to bring about statistical significance.
\end{remark}

\begin{remark}
\label{rmk:order}
The statistics we introduced here obey the following ordering,
    \begin{align*}
    \XRVI^{1}_{\alpha} \leq \RVI_{\alpha}  \leq \XRVI^{0}_{\alpha}. 
\end{align*}
This follows directly from their definitions, as each case represents a constrained minimization problem and the constraint  
becomes stricter as we move from 
$\Rdmax = 1$ to $\Rdmax = 0$. Moreover, 
\mbox{$
\XRVI^{\RVI_\alpha}_{\alpha} = \RVI_{\alpha}
$}. 
\end{remark}

\begin{example}
Continuing with the case where the t-statistic is 1, we obtain $\text{RVI}_{\alpha} \approx 9.5\%$ when $\df=100$ and $\text{RVI}_{\alpha} \approx 3\%$ when $\df=1,000$. In both cases, the inflation of the t-statistic by a $Z$ that attains the optimal strength is driven solely by changes in the point estimate. 
\end{example}

In the supplementary material we discuss the sampling distribution of these statistics, both under the null and the alternative hypothesis.

\section{Empirical Example}\label{sec:empirical_example}
We demonstrate the use of our metrics in an empirical example that estimates the effect of a vote-by-mail policy on various outcomes \citep{amlani2022impact}. This work includes an analysis of the effect of a US county's vote-by-mail (VBM) policy  on the Republican presidential vote share (dependent variable $Y$) in the 2020 election. There are 5 treatment conditions concerning the VBM policy change from 2016 to 2020, of which the authors are specifically interested in the condition: no-excuse-needed (in 2016) to ballots-sent-in (in 2020), which we will refer to as \emph{condition-1}. The authors fit a differences-in-differences model using OLS, adjusting for various covariates including an indicator for battleground states and the median age and the median income of residents in the county. Notably, the interaction term for \emph{condition-1} $\times$ \emph{year} (independent variable $D$) is not statistically significant at the 5\% level: the t-statistic is $t_{r}=0.12$ with $4,307$ degrees of freedom. 

\begin{table}[t]
    \centering
    \begin{tabular}{rrrrrr}\toprule 
        Estimate & Std. Error & t-statistic & ${\XRVI^{1}_{\alpha}}$  &${\RVI_{\alpha}}$  &${\XRVI^{0}_{\alpha}}$ \\\midrule
        0.103 & 0.873 & 0.118 & $0.089\%$  & $2.77\%$  & $99.6\%$ \\ \bottomrule 
    \end{tabular}
    \smallskip
    {\raggedright\textbf{Note:} $\df = 4307$, $\lambda_0 = 0$, $\alpha = 0.05$. \hfill }
    \caption{Robustness values for insignificance for the vote-by-mail policy study.}
    \label{tab:amlani}
\end{table}

\subsection{Robustness to unobserved suppressors} The authors were concerned that the lack of significance for the coefficient of interest could have been due to suppression effects of unobserved variables. To address this, they use the formulas of Theorem~\ref{thm:ovb} to examine whether different hypothetical values for the strength of $Z$ yield a statistically significant t-statistic. Here we complement their analysis by providing the three proposed metrics, $\XRVI^1_\alpha$, $\RVI_\alpha$ and $\XRVI^0_\alpha$. 

The results are displayed in Table~\ref{tab:amlani}. We find that  any latent variable $Z$ that explains less than 2.77\% of the residual variation of both $Y$ and $D$ ($\RVI_{\alpha}=2.77\%$) would not be sufficiently strong to make the estimate statistically significant. Moreover, if we impose no constraints on $\Rd$, then $Z$  needs to explain at least 0.089\% of the residual variation of $Y$ in order to attain such a reversal ($\XRVI^1_{\alpha}=0.089\%$) . Finally, our analysis shows that a reversal of significance solely due to gains in precision is virtually impossible: a $Z$ orthogonal to $D$ would need to explain a remarkable $99.6$\% of the residual variation of $Y$   in order to overturn the insignificant result ($\XRVI^0_{\alpha}=99.6\%$).  

To put these statistics in context, a latent variable with the same strength of association with $D$ and $Y$ as that of the battleground state indicator would only explain 0.6\% of the residual variation in $Y$ and 0.027\% of the residual variation in $D$. Since both of these numbers are below the $\RVI_{\alpha}$ value of 2.77\%, we can immediately conclude that adjusting for a latent variable $Z$ of similar strength to this observed covariate would not be sufficient to overturn the insignificant result. 

\subsection{Robustness to subsets of controls}

We now illustrate how \(\tmax\) can be used to understand whether one can easily rule out the possibility of obtaining a statistically significant t-statistic when adjusting for different subsets of control variables. The authors present three model specifications, all of which found the effect of interest to be statistically insignificant. However, could there be a specification where the results turn out to be significant? Here we consider all possible variations between their base model and an expanded model that includes 12 additional control variables. This amounts to \(2^{12} = 4{,}096\) possible regressions. Applying the results of Corollary~\ref{cor-phacking}, we obtain \(\tmax \approx 20.7\), 
meaning that we cannot rule out that there exists a specification  where the interaction term becomes significant. Given the relatively small number of combinations, we can actually compute the ground truth to verify---and, indeed there are 510 models  that yield a statistically significant result. As per Remark~\ref{rmk:multiple}, one could also consider a similar exercise using a critical threshold that accounts for multiple testing.

\section{Discussion}

The algebra of OLS both imposes strong limits on and reveals clear patterns in how reversals of statistical insignificance occur. The first lesson that emerges from our analysis is that reversals of low t-statistics are unlikely to occur through reductions in standard errors alone. As shown in the first row of Table~\ref{tab:XRVI}, even with a t-statistic of 1.75, one would need to explain at least 20\% of the residual variation in \(Y\) to achieve statistical significance at the 5\% level solely through a reduction in standard errors. Elevating a t-statistic of .5 to statistical significance requires explaining a remarkable 93\% of the residual variation~of~$Y$.  Such strong associations with the response variable are not typically common in many empirical applications.

\begin{table}[b]
\centering
\begin{tabular}{lrrrrrrr}
\toprule
$t_r$ & 0.25 & 0.50 & 0.75 &1.00 & 1.25 & 1.50 & 1.75 \\
\midrule
$\XRVI^{0}_{\alpha=0.05}$ & 0.98 & 0.93 &0.85 & 0.74 & 0.59 & 0.41 & 0.20 \\[0.5em]
$\XRVI^{q_{.95}}_{\alpha=0.05}$ & 0.41 & 0.32 & 0.24 & 0.16 & 0.10 & 0.05 & 0.01\\
\bottomrule
\end{tabular}
{\raggedright\textbf{Note:} $\df = 1000$, $\tstar \approx 1.96$. \hfill }
\caption{Approximate values of XRVI for various values of $t_r$.}
\label{tab:XRVI}
\end{table}

A second consequence of our findings is that, in RCTs, it should be difficult to observe reversals of low t-statistics due to covariate adjustments. Since covariates in such trials typically have zero association with the treatment by design (barring sampling errors), their inclusion is unlikely to significantly shift the point estimate. A back-of-the-envelope calculation illustrates this point: if \( D \) is randomized, then \(\df \Rd\) asymptotically follows a chi-squared distribution with one degree of freedom (see supplement). Thus, letting \( q_{.95} \) denote the asymptotic approximation of the 95th percentile for \(\Rd\), we can calculate \(\XRVI^{q_{.95}}_{0.05}\) for various values of $t_r$. These values are recorded in the second line of Table~\ref{tab:XRVI}. With the exception of \( t_r = 1.75 \) and \( t_r = 1.5 \), the values of \(\Ry\) required to reverse statistical insignificance remain moderate to large, suggesting that such reversals should be uncommon in practice. 

Finally, and perhaps counter-intuitively, even when included variables are highly predictive of the response, reversals of insignificance are still typically  driven by shifts in the point estimate rather than by reductions in standard errors. To illustrate, consider the usual critical threshold of $t^*_{\alpha, \df-1}\approx 2$ and any observed t-statistic below $1$. Then, if $\Ry \leq .5$, it is \emph{impossible} to obtain a reversal that is not mainly driven by changes in point estimate. In other words, any post-mortem analysis of such significance reversals, using decompositions like (\ref{eq:t.tr}), must necessarily find that the relative change in bias is larger than the relative change in standard errors.  Overall, these results closely mirror  empirical patterns of reversals of statistical insignificance observed in applied research, such as those documented by \cite{lenz2021achieving}, and may offer a purely algebraic explanation for at least some of these patterns.

\begin{appendix}
\section*{Appendix: Deferred proofs}

\begin{proof}[Proof of Theorem~\ref{thm:t-max}]
    From (\ref{bf}) and (\ref{selambda}), the magnitude of the t-statistic for $H_0: \lambda = \lambda_0$ can be written as a function of $\Ry$ and $\Rd$,
    \begin{align}
        {\textstyle t(\Ry, \Rd)= 
        \frac{|(\hat{\lambda}_{r } - \lambda_0) \pm \BF \times \widehat{\se}(\hat{\lambda}_{r }) \times \sqrt{\df}|}{\widehat{\se}(\hat{\lambda}_{r })\times\SEF\times\sqrt{\frac{\df}{\df-1}}}.} \label{tref}
    \end{align}
    We wish to maximize $t(\Ry, \Rd)$  under the posited bounds $\Ry \leq \Rymax$ and  $\Rd \leq \Rdmax$. That is, we want to solve the constrained maximization problem,
    \begin{align}
        t_{\boldsymbol{R^2}}^{\max} &= \max_{\Ry, \Rd} t(\Ry, \Rd) \label{maxtref}
    \end{align}
    such that $\Ry \leq \Rymax$,  $\Rd \leq \Rdmax$. First notice that we should choose the direction of the bias that increases the magnitude of the difference $(\hat{\lambda} - \lambda_0)$. If $(\hat{\lambda}_{r } - \lambda_0) >0$ then we should add the $\BF$ term in (\ref{tref}), whereas if $(\hat{\lambda}_{r } - \lambda_0) < 0$ then the bias should be subtracted.
     Both cases yield the same (simplified) objective function as argued below.

    Suppose that $\hat{\lambda}_{r } > \lambda_0$. Then the absolute value from (\ref{maxtref}) can be dropped and the objective function becomes 
    \begin{align*}
       {\textstyle \frac{\hat{\lambda}_{r } - \lambda_0 + \BF \times \widehat{\se}(\hat{\lambda}_{r }) \times \sqrt{\df}}{\widehat{\se} (\hat{\lambda}_{r })\times\SEF\times\sqrt{\frac{\df}{\df-1}}}= 
        \frac{f_{r}  \times \widehat{\se}(\hat{\lambda}_{r }) + \BF \times \widehat{\se}(\hat{\lambda}_{r })}{\widehat{\se}(\hat{\lambda}_{r })\times\SEF\times\sqrt{\frac{1}{\df-1}}}. } 
    \end{align*}
    Now suppose that $\hat{\lambda}_{r } < \lambda_0$. We again have, 
    \begin{align*}
        {\textstyle  \frac{\lambda_0 - \hat{\lambda}_{r } + \BF \times \widehat{\se}(\hat{\lambda}_{r }) \times \sqrt{\df}}{\widehat{\se} (\hat{\lambda}_{r })\times\SEF\times\sqrt{\frac{\df}{\df-1}}} 
        = \frac{f_{r}  \times \widehat{\se}(\hat{\lambda}_{r }) + \BF \times \widehat{\se}(\hat{\lambda}_{r }) }{\widehat{\se}(\hat{\lambda}_{r })\times\SEF\times\sqrt{\frac{1}{\df-1}}}.}
    \end{align*}

    Therefore, after some algebraic manipulation, the maximum t-value in (\ref{maxtref}) will be of the form,
    \begin{align*}
        {\textstyle \tmax
        = \frac{f_{r} \sqrt{1-\Rdstar} + \sqrt{\Rystar\Rdstar}}{\sqrt{(1-\Rystar) / (\df-1)}}}
    \end{align*}
    where $\Rystar$, $\Rdstar$ are the values of $\Ry$ and $\Rd$ that optimize (\ref{maxtref}).
   
    We now find analytical expressions for the optimizers  $\Rystar$, $\Rdstar$.
    In what follows we write $t$  for the objective function with the understanding that it is written in its modified form above. The partial derivative of $\frac{t}{\sqrt{\df-1}}$ with respect to $\Ry$ is
    \begin{align*}
        {\textstyle \frac{\partial t / \sqrt{\df-1}}{\partial \Ry} = \tfrac{f_{r} \sqrt{(1-\Rd)\Ry}+\sqrt{\Rd}}{2(1-\Ry)^{\frac{3}{2}}\sqrt{\Ry}}.}
    \end{align*}
    Since the partial of $t$ with respect to $\Ry$ is always positive, we have $\Rystar = \Rymax$ unconditionally, i.e. $\Rystar$ always lies on the boundary.
    
    We now turn to the partial derivative of $\frac{t}{\sqrt{\df-1}}$ with respect to $\Rd$:
    \begin{align*}
        {\textstyle \frac{\partial t / \sqrt{\df-1}}{\partial \Rd} = \tfrac{-f_{r} \sqrt{\Rd}+\sqrt{\Ry(1-\Rd)}}{2\sqrt{(1-\Ry)(1-\Rd)\Rd}}.}
    \end{align*}
    It is straightforward to check that the second derivative of $\frac{t}{\sqrt{\df-1}}$ is negative with respect to $\Rd$. 
    Thus, when attainable, the zero of the first partial derivative with respect to $\Rd$ is a maximizer. Solving for the value that makes the first derivative zero yields: 
     \begin{align}
         \Rdstar = \frac{\Rymax}{f_{r} ^2 + \Rymax}. \label{xrvi1:d}
     \end{align}
    This interior point solution is only feasible when $f_{r} ^2 \geq \Rymax(1-\Rdmax)/\Rdmax$. Otherwise, if
    \begin{align}
         f_{r} ^2 < \Rymax(1-\Rdmax)/\Rdmax,
         \label{boundcondtn}
    \end{align}
    then the partial with respect to $\Rd$ is  strictly positive for all $\Rd \leq \Rdmax$ and so we obtain the boundary solution $\Rdstar = \Rdmax$.
\end{proof}

\begin{proof}[Proof of Theorem~\ref{thm:multi-tmax}]
Let $\mathbf{Z}$ denote an $(n~\times~m)$ matrix of  unobserved covariates and let $\boldsymbol{\hat{\gamma}}$ denote the coefficient vector of $\mathbf{Z}$. We are now interested in the long regression
\begin{align} 
    Y = \hat{\lambda}D + \mathbf{X}\hat{\beta} + \mathbf{Z}\hat{\gamma} + \hat{\epsilon}. \label{mv-regmodel}
\end{align}
Consider the $(n~\times~1)$ vector $Z_L := \mathbf{Z}\hat{\gamma}$. The regression 
\begin{align} 
    Y = \hat{\lambda}D + \mathbf{X}\hat{\beta} + Z_L + \hat{\epsilon} \label{zstar-regmodel}
\end{align}
yields the same value for $\hat{\lambda}$; therefore, the bias induced by $\mathbf{Z}$ is equal to that induced by $Z_L$  and $R^2_{Y \sim Z_L | D, \mathbf{X}} = R^2_{Y \sim \mathbf{Z} | D,\mathbf{X}}$. On the other hand, since $\hat{\gamma}$ is chosen solely to maximize $R^2_{Y \sim \mathbf{Z} | D,\mathbf{X}}$, we also have that $R^2_{D \sim Z_L | \mathbf{X}} \leq R^2_{D \sim \mathbf{Z} | \mathbf{X}}$. Now observe that the standard error formula from \eqref{selambda} holds for multivariate $\mathbf{Z}$ if we correctly adjust for the degrees of freedom \citep{cinelli_making_2020}. Further note that the bias induced by $Z_L$ is a strictly increasing function of $R^2_{D \sim Z_L | X}$. Thus, the most adversarial choice of $\mathbf{Z}$  is such that $R^2_{D \sim Z_L | \mathbf{X}} = R^2_{D \sim \mathbf{Z} | \mathbf{X}}$. We can thus assess the maximum t-statistic of a matrix $\mathbf{Z}$ by considering that of a single adversarial vector $Z_L$ and further adjusting for the degrees of freedom.
\end{proof}

\begin{proof}[Proof of Corollary~\ref{cor-phacking}] For any subset $\mathbf{X}_{S}$ of the columns of observed covariate matrix $\mathbf{X}$, recall that $R^2_{Y\sim\mathbf{X}_{S}|D} \leq \Ryx$ and $R^2_{D\sim\mathbf{X}_{S}} \leq \Rdx$. Now apply the proof of Theorem~\ref{thm:multi-tmax} with the alteration that the constraint $\Rdlz \leq \Rdmv$ is not necessarily tight, since $Z_L$ may not be adversarial for a specific dataset.
\end{proof}

For all cases below, consider the following condition for significance: 

\begin{align}
        \tstar \leq t^{\max}_{\boldsymbol{R^2}}. \label{maxineq}
\end{align}
\begin{proof}[Proof of Theorem~\ref{thm:XRVI-0}]
    First consider the case in which $f_{r} = 0$. This only happens if $\hat{\lambda}_r = \lambda_0$. Since here $\Rd=0$, the inclusion of $Z$ does not alter the point estimate and we still obtain $\hat{\lambda} = \lambda_0$ after adjusting for $Z$. Therefore, the adjusted t-statistic will be zero regardless of the value of the standard error.

    If $f_{r} > \fstar$ then we are already able to reject $H_0$ even if $Z$ has zero explanatory power.
    Otherwise, given the constraints $\Ry \leq \XRVI$ and $\Rd = 0$, the expression for $t_{\boldsymbol{R^2}}^{\max}$ simplifies to 
    \begin{align*}
    t_{\XRVI,0}^{\max} = \max_{\Ry} \frac{f_{r} }{\sqrt{1-\Ry}} \sqrt{\df-1} 
    \end{align*}
    such that $\Ry \leq \XRVI$.
    This is a strictly increasing function of $\Ry$ and thus attains its maximum at $\Rystar=\XRVI$. Thus solving for the minimum value of $\XRVI$ that satisfies (\ref{maxineq})
    is equivalent to solving for $\XRVI$ at the equality. That is,
    \begin{align*}
        \fstar &= \frac{f_{r} }{\sqrt{1-\XRVI^0_{\alpha}}}.
    \end{align*}
    Squaring and rearranging terms, we obtain
    \begin{align*}
        \XRVI^0_{\alpha} = 1-\left(\frac{f_{r} }{\fstar}\right)^2,
    \end{align*}
    as desired.
\end{proof}

\begin{proof}[Proof of Theorem~\ref{thm:XRVI-1}]
    If $f_{r} > \fstar$ then we are already able to reject $H_0$ even if $Z$ has zero explanatory power, and thus the minimal strength to reject $H_0$ is zero. 
    Otherwise, consider constraints $\Ry \leq \XRVI$ and $\Rd \leq 1$. From the proof of Theorem~\ref{thm:t-max} we see that $t_{\boldsymbol{R^2}}^{\max}$ is an increasing function of $\XRVI$. Thus solving for the minimum value of $\XRVI$ that satisfies (\ref{maxineq}) is equivalent to solving for $\XRVI$ at the equality. Notice that (\ref{boundcondtn}) is not satisfied here. We therefore plug in the interior point solution from Theorem~\ref{thm:t-max} for $t_{\boldsymbol{R^2}}^{\max}$ and solve for $\XRVI^{1}_{\alpha}$. This results in the equation
    \begin{align*}
        \fstar 
        &= \sqrt{\frac{f_{r} ^2 + \XRVI^{1}_{\alpha}}{1-\XRVI^{1}_{\alpha}}}
    \end{align*}
    which yields the solution 
    \begin{align*}
        \XRVI^{1}_{\alpha} = \dfrac{\fstarsq - f_{r} ^2}{1+\fstarsq},
    \end{align*}
as desired.
\end{proof}

\begin{theorem}[Closed-form expression for $\XRVI^{\Rdmax}_{\alpha}$]
\label{thm:xriv-r}
Let $\Rdmax > 0$ or $f_{r} > 0$. Then the analytical expression for $\XRVI^{\Rdmax}_{\alpha}$ is
\begin{align*}
    \XRVI^{\Rdmax}_{\alpha} =
    \begin{cases}
        0, &\text{if} \enspace q \leq 0,\\
        \XRVI^{1}_{\alpha}, &\text{if} \enspace {0 < q \leq \Rdmax}, \\
        \frac{-b + s\sqrt{b^2 - 4ac}}{2a}, &\text{otherwise},
    \end{cases}
\end{align*}
where
\begin{align}
    a &= 1 \label{a}, \\ 
    b &= {\scriptstyle - 2\left[ \frac{\fstarsq - (1-\Rdmax)f_r^2}{\fstarsq + \Rdmax} + \frac{2f_r^2(1-\Rdmax)\Rdmax}{(\fstarsq + \Rdmax)^2} \right]}, \label{b} \\
    c &= { \left[\frac{\fstarsq - (1-\Rdmax)f_r^2}{\fstarsq + \Rdmax}\right]^2,} \label{c} \\ 
    q &= { \frac{\fstarsq - f_r^2}{\fstarsq (1+ f_r^2)}},
\end{align}
and $s\in \{-1, 1\}$ is chosen to yield the valid quadratic root, i.e., $t_{\XRVI,\Rdmax}^{\max}=\tstar$. 
If $\Rdmax = 0$ and $f_{r} = 0$, there is no value of $\Ry$ capable of overturning an insignificant result.
\end{theorem}

\begin{proof}[Proof of Theorem~\ref{thm:xriv-r}]
As argued in the proofs of Theorems~\ref{thm:XRVI-0}~and~\ref{thm:XRVI-1}, if $\fstar \leq f_{r} $ (which implies $q\leq 0$), then the minimum strength of $Z$  necessary to attain significance is zero.  If $f_r = 0$ and $\Rdmax = 0$ then by Theorem~\ref{thm:XRVI-0}, we cannot overturn an insignificant result. Otherwise, solving for the minimum value of $\XRVI$ that satisfies (\ref{maxineq}) is equivalent to solving for $\XRVI$ at the equality. 

Now let $f_r>0$. Here we have two cases. Either both $R^2$ values reach the bound or the optimal value of $\Rd$ is an interior point. For the latter case, this means the constraint $\Rd$ is not binding and thus $\XRVI^{\Rdmax}_{\alpha}$ should equal  $\XRVI^{1}_{\alpha}$. Recall that $t$ is concave down with respect to $\Rdmax$; therefore, the optimal value of $\Rd$ is the interior point solution $\Rdstar$, as defined in \eqref{xrvi1:d}, if and only if $\Rdstar \leq \Rdmax$. We can simplify this condition to be of the form,
\begin{align*}
     q:=\frac{\fstarsq - f_r^2}{\fstarsq (1+ f_r^2)} \leq \Rdmax.
\end{align*}
It remains to solve for the case where both coordinates reach the bound. Here, the equality in (\ref{maxineq}) simplifies to
    \begin{align}
        {\textstyle \tstar = \frac{f_{r} \sqrt{1-\Rdmax} + \sqrt{\Rdmax\XRVI}}{\sqrt{(1-\XRVI)/(\df-1)}}.} \label{XRVI}
    \end{align}
    
We now solve for $\XRVI$ in (\ref{XRVI}) by squaring both sides and taking the valid root of the quadratic equation. The simplified quadratic form is
\begin{align}
        &{\scriptstyle \XRVI^2 - \left[ \frac{2(\fstarsq - (1-\Rdmax)f_r^2)}{\fstarsq + \Rdmax} + \frac{4f_r^2(1-\Rdmax)\Rdmax}{(\fstarsq + \Rdmax)^2} \right]\XRVI} \nonumber \\
        &{\scriptstyle \quad \quad + \left[\frac{\fstarsq - (1-\Rdmax)f_r^2}{\fstarsq + \Rdmax}\right]^2 = ~0. }
\end{align}
The expressions for $a,b,c$ in (\ref{a})-(\ref{c}) immediately follow from Sridharacharya-Bhaskara's formula for quadratic equations.  Finally, we note that if $f_r=0$ and $\Rdmax > 0$ then the $\XRVI^{\Rdmax}$ solution is valid. 
\end{proof}

\begin{proof}[Proof of Theorem~\ref{thm:RVI}] 
    The derivation of $\RVI_{\alpha}$ follows that of $\XRVI^{\Rdmax}_{\alpha}$ very closely (see proof of Theorem~\ref{thm:xriv-r}). The only difference is that when $f_{r} ^2 < 1-\RVI$, the equality in (\ref{maxineq}) simplifies to
    \begin{align*}
        \tstar = \frac{f_{r} \sqrt{1-\RVI} + \RVI}{\sqrt{1-\RVI}} \times \sqrt{\df-1}
    \end{align*}
    which is equivalent to 
    \begin{align}
        \fstar = f_{r}  + \frac{\RVI}{\sqrt{1-\RVI}}. \label{fstar}
    \end{align}
    Let $f_\Delta = \fstar - f_{r} $. Then (\ref{fstar}) further reduces to a quadratic function of $\RVI$ with positive root 
    \begin{align*}
        \RVI = \frac{1}{2}(\sqrt{f_\Delta^4 + 4f_\Delta^2}-f_\Delta^2).
    \end{align*} 
    It remains to show that (\ref{boundcondtn}),
       i.e. $f_{r} ^2 \geq 1-\RVI$,
    is equivalent to 
    \begin{align}
        f_{r}  > \frac{1}{\fstar}. \label{newcondtn}
    \end{align}
    This is equivalent to showing that $\RVI_{\alpha}$ is given by the interior point solution if and only if (\ref{newcondtn}) holds. To see why, recall that for the interior point solution, 
    \begin{align}
        \Rymax = \Rdmax = \RVI_{\alpha} = \dfrac{\fstarsq - f_{r} ^2}{1+\fstarsq}. \label{interiorsoln}
    \end{align}
    Plugging (\ref{interiorsoln}) into (\ref{boundcondtn}), we have
    \begin{align*}
        f_{r} ^2 &\geq 1-\RVI_{\alpha}  
        =  1-\frac{\fstarsq - f_{r} ^2}{1+\fstarsq}
    \end{align*}
    which reduces to
        $\fstar \geq \frac{1}{f_{r} }$.
\end{proof}

\end{appendix}
%
%

\begin{acks}[Acknowledgments]
The authors thank the referees, an Associate
Editor and the Editor for their helpful feedback. 
\end{acks}
\begin{funding}
This work is supported in part by the Royalty Research Fund at the University of Washington, the National Science Foundation under Grant No. MMS-2417955, and the Natural Sciences and Engineering Research Council of Canada under Grant No. 599253. 
\end{funding}

\begin{supplement}
\stitle{Online Supplement}
\sdescription{Section 1 provides a comparison of $\RVI_{\alpha}$ and $\XRVI_{\alpha}$ with $\RV_{\alpha}$, $\XRV_{\alpha}$ from \cite{cinelli_making_2020, cinelli2025omitted}. Section 2 derives the sampling distributions of $\RVI_{\alpha}$ and $\XRVI_{\alpha}$ both under the null and alternative hypothesis. Section~3 derives the sampling distribution of $\Rd$ when the treatment $D$ is randomized.}
\end{supplement}


\bibliographystyle{imsart-nameyear} 
\bibliography{bibliography}       


\end{document}